\newtheorem{thm}{Theorem}
\newtheorem{claim}{Claim}
\numberwithin{equation}{section}
\renewcommand{\footnote}{\endnote}
\newcommand{\ignore}[1]{}\makeglossary
\begin{document}
	\bibliographystyle{amsplain}

\title{Finite groups with regular subgroup graph}
\author{Andrea Lucchini}
\address{A. Lucchini, Universit\`a di Padova, Dipartimento di Matematica ``Tullio Levi-Civita'', Via Trieste 63, 35121 Padova, Italy}
\email{lucchini@math.unipd.it}

\begin{abstract}We prove that the subgroup graph of a finite group $G$ is regular if and only if $G$ is cyclic with square-free order.
\end{abstract}


\thanks{Project funded by the EuropeanUnion – NextGenerationEU under the National Recovery and Resilience Plan (NRRP), Mission 4 Component 2 Investment 1.1 - Call PRIN 2022 No. 104 of February 2, 2022 of Italian Ministry of University and Research; Project 2022PSTWLB (subject area: PE - Physical Sciences and Engineering) " Group Theory and Applications"}
\maketitle

The subgroup graph $L(G)$ of a finite group $G$ is the graph whose vertices are the subgroups
of the group and two vertices, $H_1$ and $H_2,$ are connected by an edge if and only if $H_1 \leq H_2$ and there is no subgroup $K$ such that $H_1\leq K \leq H_2.$
It was conjectured by Georgiana Fasol\u{a} and Marius T\u{a}rn\u{a}uceanu that $L(G)$ is a regular graph if and only if $G$ is a cyclic group of square-free order.
In this short note we give an elementary proof of this conjecture.

\begin{thm}
	Let $G$ be a finite group. Then $\Gamma(G)$ is a regular graph if and only if $G$ is cyclic and the order of $G$ is square-free.
\end{thm}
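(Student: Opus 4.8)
The plan is to prove the two implications separately, with the reverse implication essentially free and the forward implication carrying all the weight. For the \emph{if} direction, suppose $G$ is cyclic of square-free order $n=p_1\cdots p_k$ with the $p_i$ distinct primes. A subgroup of a cyclic group is determined by its order, so the subgroups correspond bijectively to the divisors of $n$, that is, to the subsets of $\{p_1,\dots,p_k\}$, with $H_1\le H_2$ precisely when the associated subsets are nested. Hence the subgroup lattice is the Boolean lattice $B_k$, and its covering relation joins two subsets exactly when they differ in a single element. Therefore $\Gamma(G)$ is the hypercube graph $Q_k$, which is $k$-regular (for $k=0$ one gets the one-vertex graph, also regular). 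This settles the direction and pins down the expected common degree $d=k$, the number of prime divisors.

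For the \emph{only if} direction I would begin from the two extreme vertices. Writing $d$ for the common degree, the trivial subgroup $\{1\}$ lies below everything, so its degree equals the number of \emph{minimal} subgroups of $G$; dually $G$ itself lies above everything, so its degree equals the number of \emph{maximal} subgroups. Regularity thus forces
\[
\#\{\text{minimal subgroups}\}=\#\{\text{maximal subgroups}\}=d .
\]
Moreover a maximal subgroup $M$ is covered in the lattice only by $G$, so its up-degree is $1$ and it must itself have exactly $d-1$ maximal subgroups; dually every minimal subgroup is covered by exactly $d-1$ subgroups. These bookkeeping identities are the engine of the argument. The small cases go first: $d=0$ forces $G=\{1\}$, and since the Hasse diagram of a lattice is connected, $d=1$ forces $\Gamma(G)\cong K_2$, i.e.\ $G$ has no proper nontrivial subgroup and so is of prime order.

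For $d\ge 2$ the target is to show that $|G|$ is square-free and that $G$ has a unique subgroup of each order, which is equivalent to cyclicity. My strategy would be to first reduce to nilpotent groups, because for a nilpotent $G$ the subgroup lattice factors as the direct product of the lattices of the Sylow subgroups, and the Hasse diagram of a product lattice is the Cartesian (box) product of the factor Hasse diagrams, which is regular if and only if each factor is regular. This reduces the problem to $p$-groups, where the covering relation has constant index $p$, all maximal chains have the same length, and the lattice is graded. A cyclic $p$-group $C_{p^a}$ gives a path on $a+1$ vertices, regular only for $a\le 1$; a non-cyclic $p$-group has $\ge p+1$ subgroups of order $p$ but its minimal subgroups have too few subgroups of order $p^2$ above them, so a short comparison of $\deg(\{1\})$ against the degree of a minimal subgroup forces a contradiction. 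Hence each Sylow factor is $C_p$ and $G$ is cyclic of square-free order.

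The main obstacle is precisely the reduction to the nilpotent case, equivalently the proof that $|G|$ is square-free in the presence of non-normal maximal subgroups. Regularity at the top and bottom controls only the \emph{counts} of minimal and maximal subgroups, whereas for a non-nilpotent group the subgroup lattice need not be graded and a subgroup of prime order can be maximal of composite index (for example $C_3$ is a maximal subgroup of $A_4$ of index $4$), so the naive ``climb a maximal chain and watch the degree increase'' argument can break down. The delicate point will be to combine the identity $\#\{\text{minimal}\}=\#\{\text{maximal}\}=d$ with the fact that every maximal subgroup has exactly $d-1$ maximal subgroups in order to exclude all non-nilpotent configurations and every Sylow subgroup that is not of prime order; once that is done, the clean box-product computation above finishes the proof.
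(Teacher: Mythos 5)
Your \emph{if} direction is complete and correct (the lattice of a cyclic group of square-free order is Boolean, so $\Gamma(G)$ is a hypercube), and it agrees with the paper's one-line observation. The \emph{only if} direction, however, has a genuine gap, and it sits exactly where the whole difficulty of the theorem lies: you never actually rule out non-nilpotent groups, nor non-abelian ones. Your bookkeeping identities (the number of minimal subgroups equals the number of maximal subgroups equals $d$; every maximal subgroup has exactly $d-1$ maximal subgroups) are correct consequences of regularity, but by themselves they exclude nothing: as you yourself note, in a non-nilpotent group the lattice need not be graded and a subgroup of prime order can be maximal, so there is no mechanism in your text that converts these counts into a contradiction. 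Your proposal ends by saying that the delicate point ``will be'' to exclude all non-nilpotent configurations --- that exclusion \emph{is} the theorem, so what you have is a plan plus the easy half, not a proof. (A smaller slip inside your conditional $p$-group step: a non-cyclic $p$-group need not have at least $p+1$ subgroups of order $p$; generalized quaternion $2$-groups have a unique involution. This is repairable, but it is subordinate to the main gap.)

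For comparison, the paper fills precisely this hole with a local counting idea that your proposal lacks. For an odd prime $p$ and a $p$-element $g$, take a cyclic $p$-subgroup $\langle x\rangle$ of maximal order containing $g$. Any subgroup $H$ in which $\langle x\rangle$ is maximal cannot be a cyclic $p$-group, hence (by Robinson 5.3.6) contains a minimal subgroup $A$ of $G$ not inside $\langle x\rangle$, and then $H=\langle x,A\rangle$ by maximality; so $\delta_2(\langle x\rangle)\leq \alpha-1$, where $\alpha$ is the number of minimal subgroups of $G$. Since $\delta_1(\langle x\rangle)=1$, regularity forces $\delta_2(\langle x\rangle)=\alpha-1$, which forces the map $A\mapsto\langle x,A\rangle$ to be injective, and hence $x$ normalizes \emph{every} minimal subgroup of $G$. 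From this single claim the paper deduces in turn: the subgroup $N$ generated by the odd-order minimal subgroups is abelian and normal; $G/N$ is a $2$-group (by counting the covers of $N$); a Sylow $2$-subgroup is elementary abelian (by comparing degrees of $Q$ and of its Frattini subgroup); hence $G$ is abelian; and finally a degree comparison between a maximal subgroup and the trivial subgroup kills any Sylow subgroup of order $p^d$ with $d\geq 2$. In other words, nilpotency (indeed commutativity) is not an assumption one reduces to before invoking your box-product argument --- it is itself a consequence of regularity, extracted by applying the regularity hypothesis at carefully chosen vertices (maximal cyclic $p$-subgroups, $N$, the Frattini subgroup of a Sylow $2$-subgroup). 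That is the missing idea you would need to supply to complete your argument.
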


Clearly if $G$ is cyclic and $|G|=p_1\cdots p_t$ with $p_1,\dots,p_t$ distinct primes, then every vertex of $\Gamma(G)$ has degree $t.$ So for the rest of the paper we will assume that $G$ is a finite group such that $L(G)$ is regular and we will obtain the proof of the result through successive claims.

\

Given a subgroup $H$ of $G$ we will denote with $\delta(H)$ the degree of $H$ in the subgroup graph of $G$, with $\delta_1(H)$ the number of maximal subgroups of $H$ and with $\delta_2(H)$ the number of subgroups $K$ of $G$ containing $H$ as a maximal subgroups. Clearly $\delta(H)=\delta_1(H)+\delta_2(H).$

\

For every prime $p$ dividing $|G|$, we denote by $\mathcal A_p$ the set of the subgroups of $G$ with order $p$ and by $\mathcal A$ the disjoint union
$\cup_p \mathcal A_p.$ Moreover we set $\alpha_p:=|\mathcal A_p|$ and $\alpha:=|\mathcal A|=\sum_p \alpha_p.$ Notice that $\alpha=\delta(\{1\}),$
so our assumption that $\Gamma(G)$ is regular is equivalent to saying that $\delta(H)=\alpha$ for every $H\leq G.$

\begin{claim}\label{uno}
	If $p$ is an odd prime and $g\in G$ has order a $p$-power, then $g \in N_G(A)$ for every $A\in \mathcal A.$
\end{claim}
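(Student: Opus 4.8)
The plan is to translate the statement into a fixed-point assertion and then exploit the rigidity $\delta(H)=\alpha$ for every $H\le G$. First I would record the degree identities forced by regularity. Any subgroup $R$ of prime order satisfies $\delta_1(R)=1$, so $\delta_2(R)=\alpha-1$; likewise the cyclic $p$-group $\langle g\rangle$ has $\delta_1(\langle g\rangle)=1$, hence $\delta_2(\langle g\rangle)=\alpha-1$. Thus every prime-order subgroup, and every cyclic $p$-subgroup, is covered by exactly $\alpha-1$ subgroups of $G$. I would also note the reformulation: writing $P=\langle g\rangle$ and letting $P$ act on $\mathcal{A}$ by conjugation, the claim says precisely that every $A\in\mathcal{A}$ is a fixed point of $P$, that is $P\le N_G(A)$ for all $A$. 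This is strictly stronger than merely asking $[G:N_G(A)]$ to be a $2$-power, although the latter is a consequence, since the subgroup generated by all odd-order $p$-elements would then lie in every such normalizer.

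Assume, for contradiction, that $A^{g}\ne A$ for some $A\in\mathcal{A}$ of order $q$. Then the $P$-orbit $\{A^{g^{i}}\}$ has $p$-power size $m\ge p\ge 3$, producing $m$ distinct conjugate subgroups of order $q$ cyclically permuted by $g$. I would set $M=\langle A^{g^{i}}\rangle$, which is $g$-invariant, and study the lattice of $\langle M,g\rangle$ near the relevant small vertices. The idea is that the several $g$-conjugates of $A$, together with the cyclic subgroup $\langle g\rangle$, force a subgroup whose number of covers, or whose number of maximal subgroups, deviates from the value dictated by $\delta\equiv\alpha$: conjugation by $g$ multiplies certain local configurations by the orbit length $m$, while regularity insists that every degree equal the single constant $\alpha$, and these two demands cannot both hold once $m\ge 3$. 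The oddness of $p$ enters here to guarantee $m\ge 3$ (ruling out the $m=2$ behaviour available to involutions) and to exclude the exceptional $2$-group configurations, dihedral and quaternion, in which such counts could coincidentally rebalance.

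I expect the main obstacle to be producing the degree-violating vertex uniformly, since this requires enough control of the subgroup lattice of $\langle M,g\rangle$ to count covers exactly. I anticipate splitting into the cases $q=p$ and $q\ne p$, and, within the $p$-local analysis, distinguishing whether the order-$p^{2}$ subgroups are cyclic or elementary abelian, because $\delta_1$ jumps from $1$ to $p+1$ between these two types, and through the identity $\delta_1+\delta_2=\alpha$ this directly alters the bookkeeping for $\delta_2$. Reconciling all of these sub-cases under the single equation $\delta(H)=\alpha$, while invoking the oddness of $p$ exactly where the $2$-group exceptions would otherwise survive, is where the real work of the claim lies.
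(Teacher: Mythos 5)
Your setup is sound: the identities $\delta_1(\langle g\rangle)=1$ and $\delta_2(\langle g\rangle)=\alpha-1$, and the reformulation as a fixed-point statement for the conjugation action of $\langle g\rangle$ on $\mathcal A$, are exactly what regularity gives. But the proposal stops precisely where the proof has to start: you never produce the degree-violating vertex, and you acknowledge as much. The entire content of the claim is the mechanism converting the count $\delta_2=\alpha-1$ into the fixed-point statement, and that mechanism is absent. The paper supplies it not by an orbit-counting contradiction but by a pigeonhole argument at a carefully chosen vertex: let $x$ be a $p$-element of \emph{maximal} order with $g\in\langle x\rangle$. Maximality ensures that no cover $H$ of $\langle x\rangle$ is a cyclic $p$-group; then by Robinson 5.3.6 (a $p$-group with a unique minimal subgroup is cyclic or generalized quaternion, and the latter is impossible for odd $p$; if $H$ is not a $p$-group it trivially contains a minimal subgroup of another prime), $H$ contains some $A\in\mathcal A$ with $A\not\leq\langle x\rangle$, whence $H=\langle x,A\rangle$ by maximality of $\langle x\rangle$ in $H$. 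So the at most $\alpha-1$ subgroups $\langle x,A\rangle$ exhaust all covers of $\langle x\rangle$; since regularity forces exactly $\alpha-1$ covers, the map $A\mapsto\langle x,A\rangle$ must be injective. Applying injectivity to the pair $A$, $A^x$ (which generate the same cover, because $\langle x,A\rangle^x=\langle x,A\rangle$) yields $A^x=A$ for every $A\in\mathcal A$, hence $g\in\langle x\rangle\leq N_G(A)$.

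Two concrete obstructions to your plan as written. First, you work with $\langle g\rangle$ itself, but for $\langle g\rangle$ the structural statement ``every cover is $\langle g,A\rangle$ with $A\in\mathcal A$, $A\not\leq\langle g\rangle$'' is false in general: a cover of $\langle g\rangle$ can be a cyclic $p$-group of order $p\,|g|$, whose unique minimal subgroup lies \emph{inside} $\langle g\rangle$. Passing to a maximal cyclic $p$-overgroup of $g$ is exactly what eliminates this case, and it is the step your sketch lacks. Second, your intended contradiction---``conjugation multiplies local configurations by $m\geq 3$ while regularity insists on the constant $\alpha$''---is a hope, not an argument: nothing in the proposal identifies which subgroup's degree becomes wrong, and the case analysis you defer ($q=p$ versus $q\neq p$, cyclic versus elementary abelian subgroups of order $p^2$) is precisely the work that would remain. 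The paper's proof shows all of this can be bypassed: one exact count at one well-chosen vertex, plus Robinson's classification of $p$-groups with a unique minimal subgroup, closes the claim with no orbit analysis at all.
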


\begin{proof}
Let $x$ be a $p$-element of maximal order with the property that $g\in \langle x\rangle.$ Assume that $H\leq G$ has the property that $\langle x\rangle$ is a maximal subgroup of $H.$ By the maximality of $x,$ $H$
is not a cyclic $p$-group, so by \cite[5,3,6]{rob} it contains a minimal subgroup $A$ which is not contained in $\langle x \rangle$. Since $\langle x \rangle$ is a maximal subgroup of $H$, we must have $H=\langle x, A \rangle.$ In other words, every subgroup $H$ of $G$ in which $\langle x\rangle$ is maximal is generated by $x$ and an element of $\mathcal A$ different from the unique subgroup of order $p$ contained in $\langle x \rangle.$ This implies $\delta_2(\langle x\rangle)\leq \alpha-1.$ On the other hand $\delta_1(\langle x\rangle)=1$ so we must have
$\delta_2(\langle x\rangle) = \alpha-1.$ This implies that $\langle x, A_1\rangle \neq \langle x, A_2\rangle$
for every pair of distinct elements $A_1, A_2$ of $\mathcal A$. In particular $A^x=A$ for every $A\in \mathcal A.$ Since $g\in \langle x\rangle,$ we conclude
$g\in N_G(A)$ for every $A\in \mathcal A.$
\end{proof}

\begin{claim}\label{due}
	Let $N$ be the subgroup of $G$ generated by the minimal subgroups of $G$ with odd order. Then $N$ is an abelian normal subgroup of $G$.
\end{claim}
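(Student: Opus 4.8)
The plan is to treat normality and commutativity separately, invoking \Cref{uno} only for the latter. Normality is immediate and requires nothing beyond the definition of $N$: conjugation by any $g \in G$ is an automorphism of $G$, so it permutes the set of minimal subgroups of odd order among themselves, since a conjugate of a subgroup of odd prime order $p$ is again a subgroup of order $p$. As $N$ is generated by this conjugation-invariant set, we get $N^g = N$ for every $g$, that is $N \trianglelefteq G$.

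The substance of the claim is that $N$ is abelian, and here I would show that any two minimal subgroups of odd order commute elementwise, then conclude. Let $A \in \mathcal{A}_p$ and $B \in \mathcal{A}_q$ with $p, q$ odd, and fix generators $a$ of $A$ and $b$ of $B$. Since $a$ has order a power of the odd prime $p$ and $B \in \mathcal{A}$, \Cref{uno} gives $a \in N_G(B)$, i.e. $B^a = B$; symmetrically $b$ has odd prime-power order, so $A^b = A$. Hence $A$ and $B$ normalize each other, which forces $[A, B] \leq A \cap B$.

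Now I would distinguish whether $A = B$ or not. If $A \neq B$, then $A$ and $B$ are distinct subgroups of prime order, so $A \cap B = 1$ and therefore $[A, B] = 1$: every element of $A$ commutes with every element of $B$. If $A = B$ the conclusion is trivial, as a group of prime order is cyclic. Consequently the union of all minimal subgroups of odd order consists of pairwise-commuting elements, and a group generated by pairwise-commuting elements is abelian; since $N$ is generated exactly by this union, $N$ is abelian, which completes the claim together with the normality already established.

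Since the decisive input \Cref{uno} is already available, I do not expect a serious obstacle; the only point demanding care is the commutator estimate $[A, B] \leq A \cap B$, which relies on $A$ and $B$ being \emph{mutually} normal, together with the observation that distinct subgroups of prime order meet trivially. For this reason I would be careful to apply \Cref{uno} to both $a$ and $b$, since one-sided normalization alone would not yield the trivial commutator.
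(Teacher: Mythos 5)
Your proof is correct and follows essentially the same route as the paper: both apply \cref{uno} to get that any two minimal subgroups of odd order normalize each other, and then conclude $[A_1,A_2]\leq A_1\cap A_2=1$ for distinct such subgroups. The only difference is that you spell out the (immediate) normality of $N$ and the trivial case $A=B$, which the paper leaves implicit.
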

\begin{proof}
	Notice that $N=\langle A\mid A \in \cup_{p\neq 2}\mathcal A_p\rangle.$ By the previous claim, if $A_1, A_2\in \cup_{p\neq 2}A_p$, then $A_1\leq N_G(A_2)$ and $A_2\leq N_G(A_1).$ Hence, if $A_1\neq A_2$ then $[A_1,A_2]\in A_1\cap A_2=1.$
\end{proof}

\begin{claim}\label{tre}
$G/N$ is a 2-group.
\end{claim}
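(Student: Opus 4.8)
The plan is to fix an odd prime $p$ dividing $|G|$ and prove that a Sylow $p$-subgroup of $G$ is contained in $N$. Since this will hold for every odd $p$, no odd prime will divide $|G/N|$ and $G/N$ will be a $2$-group. Write $N_p$ for the Sylow $p$-subgroup of $N$. As $N$ is abelian and normal by \cref{due}, $N_p$ is characteristic in $N$ and hence normal in $G$; moreover every subgroup of $G$ of order $p$ is a minimal subgroup of odd order, so it lies in $N$ and therefore in $N_p$. The strategy is to read off both contributions to $\delta(N_p)=\alpha$ and compare them.

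For the downward degree, $N_p$ is an abelian $p$-group, so its number of maximal subgroups equals its number of subgroups of order $p$ (the usual duality between the hyperplanes of $N_p/\Phi(N_p)$ and the lines of $\Omega_1(N_p)$). Since every order-$p$ subgroup of $G$ already lies in $N_p$, this number is exactly $\alpha_p$, giving $\delta_1(N_p)=\alpha_p$ and hence $\delta_2(N_p)=\alpha-\alpha_p=\sum_{\ell\neq p}\alpha_\ell$. For the upward degree I would use that $N_p\unlhd G$: by the correspondence theorem the subgroups $K$ having $N_p$ as a maximal subgroup are precisely the preimages of the prime-order subgroups of $G/N_p$, so $\delta_2(N_p)$ equals the number $\beta=\sum_\ell\beta_\ell$ of prime-order subgroups of $G/N_p$, where $\beta_\ell$ counts those of order $\ell$.

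It then remains to compute $\beta$ prime by prime. For every prime $\ell\neq p$ I claim that $A\mapsto AN_p/N_p$ is a bijection from the order-$\ell$ subgroups of $G$ onto the order-$\ell$ subgroups of $G/N_p$, so that $\beta_\ell=\alpha_\ell$. Surjectivity is routine, since an order-$\ell$ subgroup of $G/N_p$ lifts to a group of order $\ell\,|N_p|$ whose Sylow $\ell$-subgroup maps onto it. The real point — and the step I expect to be the main obstacle — is injectivity, where one must prevent two distinct Sylow $\ell$-subgroups of $G$ from becoming equal modulo $N_p$; here \cref{uno} is exactly what is needed, because every element of the $p$-group $N_p$ normalizes $A$, so $A$ is the unique (normal) Sylow $\ell$-subgroup of $AN_p$ and can be recovered from $AN_p$. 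Granting this, $\beta_\ell=\alpha_\ell$ for all $\ell\neq p$, and comparison with $\delta_2(N_p)=\sum_{\ell\neq p}\alpha_\ell$ forces $\beta_p=0$. Thus $G/N_p$ has no subgroup of order $p$, i.e. $N_p\in\syl_p(G)$, so a Sylow $p$-subgroup of $G$ lies in $N$. As this holds for every odd prime $p$, no odd prime divides $|G/N|$ and $G/N$ is a $2$-group.
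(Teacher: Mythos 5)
Your proof is correct; every step checks out. The well-definedness and injectivity of $A\mapsto AN_p/N_p$ both follow from \cref{uno} exactly as you say (since $N_p\leq N_G(A)$, the subgroup $A$ is a normal, hence unique, Sylow $\ell$-subgroup of $AN_p$), the correspondence-theorem identification of $\delta_2(N_p)$ with the number of prime-order subgroups of $G/N_p$ is valid because $N_p$ is normal in $G$, and the duality you invoke for $\delta_1(N_p)=\alpha_p$ holds for all abelian $p$-groups — in fact it is immediate here, since $N$ is generated by commuting subgroups of prime order, so $N_p$ is elementary abelian and $\Phi(N_p)=1$. Your route shares the paper's skeleton (compute $\delta_1$ of an abelian normal subgroup by maximal/minimal duality, let regularity pin down $\delta_2$, then use \cref{uno} to account for all minimal overgroups), but you run it on each Sylow subgroup $N_p$ of $N$ separately, one odd prime at a time, whereas the paper runs it once on $N$ itself: there $\delta_1(N)=\sum_{p\neq 2}\alpha_p$ forces $\delta_2(N)=\alpha_2$, the $\alpha_2$ distinct subgroups $N\times A$ with $A\in\mathcal A_2$ exhaust the minimal overgroups of $N$, so every minimal subgroup of $G/N$ has order $2$ and $G/N$ is a $2$-group. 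Your prime-by-prime version costs more bookkeeping (a bijection to verify for every $\ell\neq p$, then a loop over all odd $p$), but it buys the sharper intermediate fact that $N_p$ is a full Sylow $p$-subgroup of $G$ for each odd $p$ — i.e., that $G/N_p$ has no elements of order $p$ — rather than only the global statement about $G/N$; the paper's single count on $N$ is the more economical of the two.
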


\begin{proof}
	Since $N$ is a direct product of elementary abelian $p$-groups, the numbers of maximal and minimal subgroups of $N$ coincides. In particular $\delta_1(N)=\sum_{p\neq 2}\alpha_p$ and therefore $\delta_2(N)=\alpha_2.$ By \cref{uno}, $N\leq N_G(A)=C_G(A)$ for every $A\in \mathcal A_2.$
	Thus, for every $A\in \mathcal A_2$, $N$ is a maximal subgroup of $\langle N, A\rangle \cong N \times A$ and different choices of $A\in \mathcal A_2$ give raise to different subgroups. So from the equality $\delta_2(N)=\alpha_2$, we deduce that $N$ has index 2 in every subgroup $H$ of $G$ in which it is maximal. This implies that $G/N$ is a 2-group.
\end{proof}

\begin{claim}\label{quattro}Let $Q$ be a Sylow 2-subgroup of $G$. Then $Q$ is elementary abelian.
	\end{claim}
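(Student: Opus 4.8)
The plan is to reduce the claim to the single statement that $G$ has no element of order $4$. Indeed, a $2$-group in which every element has order at most $2$ satisfies $(xy)^2=1$ for all $x,y$, hence is abelian and therefore elementary abelian; so $Q$ is elementary abelian precisely when it has exponent $2$. I would therefore assume, towards a contradiction, that $Q$ contains an element of order $4$, and fix a $2$-element $x\in G$ of maximal order $2^k$ with $k\ge 2$, writing $z=x^{2^{k-1}}$ for the involution of $\langle x\rangle$.

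Regularity is exploited at two places. First, at the involution $z$: by \cref{uno} and \cref{tre} every element of odd order centralises every involution, so $z$ centralises $N$. Hence if $\langle z\rangle$ is maximal in $H$, then either $H\cap N\neq 1$, forcing $H=\langle z\rangle\times A$ for an odd minimal subgroup $A$ (these account for exactly $\alpha-\alpha_2$ covers), or $H$ is a group of order $4$ containing $z$, i.e. a Klein four group through $z$ or a cyclic group $\langle y\rangle$ with $y^2=z$. Writing $c(z)$ and $q(z)$ for the numbers of these two kinds of order-$4$ subgroups, $\delta_2(\langle z\rangle)=\alpha-1$ yields the identity
\[
c(z)+q(z)=\alpha_2-1 .
\]
Since each Klein four group through $z$ corresponds to an unordered pair of commuting involutions other than $z$, one has $c(z)\le(\alpha_2-1)/2$, and hence $q(z)\ge(\alpha_2-1)/2$ for every involution $z$.

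Second, at $\langle x\rangle$: as in \cref{uno}, $\langle x\rangle$ being cyclic gives $\delta_1(\langle x\rangle)=1$ and $\delta_2(\langle x\rangle)=\alpha-1$, and every cover $H$ of $\langle x\rangle$ either contains a minimal subgroup outside $\langle x\rangle$ (so $H=\langle x,A\rangle$) or has a unique subgroup of order $2$. In the latter case \cite{rob} forces $H$ to be cyclic or generalised quaternion, and the maximality of $|x|$ excludes the cyclic possibility, leaving $H\cong Q_{2^{k+1}}$. This generalised quaternion alternative is exactly what is absent for odd primes, and is the reason \cref{uno} was restricted to odd $p$; it is the main obstacle. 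A quaternion cover $Q_{2^{k+1}}$ contributes $2^{k-1}$ further cyclic subgroups of order $4$ squaring to $z$, and distinct quaternion covers of $\langle x\rangle$ meet only in $\langle x\rangle$, so they feed disjointly into the quantity $q(z)$.

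The crux is to combine the two computations into a contradiction. If $\langle x\rangle$ admits no quaternion cover, then exactly as in \cref{uno} the equality $\delta_2(\langle x\rangle)=\alpha-1$ forces $A\mapsto\langle x,A\rangle$ to be injective with each cover containing a single minimal subgroup outside $\langle x\rangle$; this yields $A^x=A$ for every $A$, so $x$ centralises every involution, every involution then lies in $C_G(z)$, and the identity pins $c(z)=q(z)=(\alpha_2-1)/2$. I would then descend along the chain $\langle z\rangle<\langle x^{2^{k-2}}\rangle<\dots<\langle x\rangle$, applying the same cover analysis at each step to exhibit more cyclic order-$4$ subgroups over $z$ than this value permits. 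If instead a quaternion cover is present, its $2^{k-1}$ extra contributions to $q(z)$ must be weighed against the same budget $c(z)+q(z)=\alpha_2-1$. The point I expect to be hardest is the exact bookkeeping of the cyclic order-$4$ subgroups squaring to a fixed involution, since this is the one quantity that quaternion covers and the regularity identity constrain from opposite directions; a cleaner alternative worth trying is to compare $\delta(Q)$ with $\delta(M)$ for a maximal subgroup $M$ of $Q$, where the purely $2$-group part of the count produces the relation $2^{d(M)}-1=2^{d}-2$ with $d=\dim_{\FF_2}Q/\Phi(Q)$, forcing $d=1$ and thereafter $|Q|\le 2$ once the odd covers are shown to cancel.
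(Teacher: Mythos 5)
Your setup is sound as far as it goes: the dichotomy for covers of $\langle z\rangle$ (odd covers $\langle z\rangle\times A$ versus subgroups of order $4$, valid because \cref{tre} makes any cover meeting $N$ trivially a $2$-group), the identity $c(z)+q(z)=\alpha_2-1$, and the observation that a cover of $\langle x\rangle$ containing no minimal subgroup outside $\langle x\rangle$ must be cyclic or generalised quaternion are all correct consequences of Claims 1--3 and regularity. But there is a genuine gap exactly where you say you expect one: neither branch of your case division actually reaches a contradiction. In the branch with no quaternion cover you correctly pin $c(z)=q(z)=(\alpha_2-1)/2$, but the promised descent along the chain $\langle z\rangle<\langle x^{2^{k-2}}\rangle<\dots<\langle x\rangle$ is never carried out, and it is unclear that it can be: the cover analysis at an intermediate subgroup $\langle x^{2^j}\rangle$ (which always has the cyclic cover $\langle x^{2^{j-1}}\rangle$, so the map $A\mapsto\langle x^{2^j},A\rangle$ is forced to be non-injective there, unlike at the top of the chain) yields information about covers of that subgroup, not about additional cyclic order-$4$ subgroups over $z$, which is what you would need to break the budget $(\alpha_2-1)/2$. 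In the branch with a quaternion cover you only say its $2^{k-1}$ extra contributions to $q(z)$ ``must be weighed against'' the budget; but in this branch you no longer know that $x$ normalises every $A$, so you cannot pin $c(z)$, and no inequality is actually violated. The only sub-case your identities kill outright is $\alpha_2=1$, where $q(z)\ge 1$ contradicts $c(z)+q(z)=0$; everything else remains open, so the argument as proposed does not close.

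The paper avoids the quaternion analysis entirely with a different comparison, close in spirit to your ``cleaner alternative'' but with the Frattini subgroup $F$ of $Q$ in place of a maximal subgroup of $Q$. By \cref{tre}, every subgroup in which $Q$ is maximal has the form $QM_i$ with $M_i\le N$ nontrivial, normalized by $Q$ and minimal such, and distinct $M_i$ intersect trivially; choosing inside each $M_i$ a nontrivial subgroup $T_i$ minimal subject to being normalized by $F$ produces $u$ distinct subgroups $FT_1,\dots,FT_u$ in which $F$ is maximal, in addition to the $t$ covers of $F$ inside $Q$ (where $t$ is the common number of maximal and minimal subgroups of the elementary abelian quotient $Q/F$, so that $\delta(Q)=t+u$). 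Hence $\delta(F)\ge \delta_1(F)+t+u=\delta_1(F)+\delta(Q)$, and regularity forces $\delta_1(F)=0$, i.e. $F=1$, so $Q$ is elementary abelian. This construction of the covers $FT_i$ is precisely the cancellation of ``odd covers'' that your alternative defers, and it is the missing step that makes any bookkeeping of involutions, cyclic order-$4$ subgroups and quaternion covers unnecessary.
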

	
	\begin{proof}Let $F$ be the Frattini subgroup of $Q$. Notice that the number of minimal subgroups and maximal subgroup of $Q/F$ coincide. Denote by $t$ this number. Now assume that $H_1,\dots,H_u$ are the subgroups of $G$ in which $Q$ is maximal. For $1\leq i \leq u,$ we must have $H_i=QM_i$ with $M_i$ a nontrivial subgroup of $N$ normalized by $Q$ and minimal with respect to this property. Notice that
		if $i_1\neq i_2$ then $M_{i_1}	\cap M_{i_2}=1.$ For each $1\leq i\leq u$, let $T_i$ be a nontrivial subgroup of $M_i$ minimal for the property of being normalized by $F.$ Then $K_1=FT_1,\dots,FT_u$ are different subgroups of $G$ containing $F$ as maximal subgroups.
		But now we have $\delta(Q)=\delta_1(Q)+\delta_2(Q)=t+u$ and $\delta_2(F)\geq t+u.$ Hence the equality $\delta(Q)=\delta(F)$ implies $\delta_1(F)=1$ and consequently $F=1.$	\end{proof}

\begin{claim}
	$G$ is abelian and all its Sylow subgroups are elementary abelian.
\end{claim}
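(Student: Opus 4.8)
The plan is to read off both assertions from the structure already established, treating the odd part and the $2$-part of $G$ separately and then combining them.

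First I would handle the Sylow subgroups. The Sylow $2$-subgroup $Q$ is elementary abelian by \cref{quattro}. For an odd prime $p$, \cref{due} gives that $N$ is abelian, and as recorded in the proof of \cref{tre} it is a direct product of elementary abelian $p$-groups; moreover \cref{tre} shows $G/N$ is a $2$-group, so $[G:N]$ is a power of $2$. Hence for odd $p$ the full $p$-part of $|G|$ already lies in the normal subgroup $N$, so every Sylow $p$-subgroup of $G$ is a Sylow $p$-subgroup of $N$ and is therefore elementary abelian. This settles the claim on Sylow subgroups.

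Next I would prove that $G$ is abelian. Since $G/N$ is a $2$-group, a Sylow $2$-subgroup $Q$ has image a Sylow $2$-subgroup of $G/N$, which is all of $G/N$; hence $G=NQ$. It remains to check that $N$ and $Q$ commute elementwise. The proof of \cref{tre} already shows, via \cref{uno}, that $N\leq N_G(A)=C_G(A)$ for every $A\in\mathcal A_2$; and since $Q$ is elementary abelian by \cref{quattro}, $Q$ is generated by its subgroups of order $2$, all of which belong to $\mathcal A_2$. Consequently $N$ centralizes $Q$, i.e.\ $[N,Q]=1$. A group that is the product of two abelian subgroups which centralize one another is itself abelian, so $G=NQ$ is abelian.

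The one step that genuinely needs the earlier work, rather than being a formality, is $[N,Q]=1$: \cref{uno} only yields that $N$ centralizes each individual minimal $2$-subgroup, and it is precisely the elementary abelian structure of $Q$ supplied by \cref{quattro} that lets one pass from this to centralization of the entire Sylow $2$-subgroup. Everything else follows by comparing orders and invoking the commutativity relations.
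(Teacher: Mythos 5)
Your proof is correct and follows essentially the same route as the paper: write $G=NQ$, use \cref{quattro} to see $Q$ is elementary abelian and hence generated by members of $\mathcal A_2$, then invoke \cref{uno} to get $[N,Q]=1$ and conclude $G$ is abelian. Your explicit treatment of the odd Sylow subgroups (which the paper leaves implicit, via $N$ being a direct product of elementary abelian groups and $G/N$ a $2$-group) is a welcome clarification but not a different argument.
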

\begin{proof}
We have that $G=NQ$ with $Q$ a Sylow 2-subgroup. By the previous claim $Q$ is elementary abelian. In particular,
$Q=\langle A \mid A\in \mathcal A_2\rangle$. By Claim 1, for every $A\in \mathcal A_2$, $N\leq N_G(A)=C_G(A)$, hence
$[N,Q]=1$ and therefore $G$ is abelian.
\end{proof}

\begin{claim}
	$G$ is cyclic with square-free order.
\end{claim}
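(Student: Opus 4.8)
The plan is to cash in the structural information just obtained. We now know that $G$ is abelian and that each of its Sylow subgroups is elementary abelian, so $G$ decomposes as an internal direct product $G=\prod_p V_p$, where for each prime $p$ dividing $|G|$ the Sylow $p$-subgroup $V_p$ is an elementary abelian group of some rank $n_p$; equivalently, $V_p$ is an $\FF_p$-vector space of dimension $n_p$. The goal is precisely to show that $n_p=1$ for every such $p$, since this is exactly the statement that $G$ is cyclic of square-free order. In this language $\mathcal A_p$ is the set of $1$-dimensional subspaces (lines) of $V_p$, so $\alpha_p=(p^{n_p}-1)/(p-1)$.

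First I would fix a prime $p$ and a subgroup $A\in\mathcal A_p$, and exploit regularity in the form $\delta(A)=\delta(\{1\})=\alpha$. Since $A$ has prime order, its only maximal subgroup is the trivial one, so $\delta_1(A)=1$ and hence $\delta_2(A)=\alpha-1$. The heart of the argument is to enumerate the subgroups $K$ of $G$ in which $A$ is maximal. Because $A$ is maximal in $K$, the quotient $K/A$ has prime order, so $|K|=pr$ for some prime $r$, and (using that $G$ is abelian) the possibilities split into two disjoint families: either $r=p$ and $K$ is a subgroup of order $p^2$ of $V_p$ containing $A$, i.e.\ a $2$-dimensional subspace through the line $A$; or $r=q\neq p$ and $K=A\times B$ with $B\in\mathcal A_q$. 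The second family contributes $\sum_{q\neq p}\alpha_q=\alpha-\alpha_p$ distinct subgroups, while the first contributes the number of $2$-dimensional subspaces of $V_p$ through $A$, which equals the number of lines of the quotient $V_p/A$, namely $(p^{n_p-1}-1)/(p-1)$. These two families account for every such $K$ without overlap, so
\[
\delta_2(A)=\frac{p^{n_p-1}-1}{p-1}+\alpha-\alpha_p.
\]

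Combining this with $\delta_2(A)=\alpha-1$ and cancelling $\alpha$ yields $(p^{n_p-1}-1)/(p-1)=\alpha_p-1=(p^{n_p}-p)/(p-1)$, which rearranges to $p^{n_p-1}(p-1)=p-1$, forcing $p^{n_p-1}=1$ and hence $n_p=1$. As $p$ was arbitrary, every Sylow subgroup of $G$ has order $p$, so $G=\prod_p C_p$ is cyclic of square-free order, which completes the proof. I expect the main obstacle to be the clean enumeration of the subgroups in which $A$ is maximal: one must verify that these are exactly the two families described, that they are disjoint, and that distinct $2$-dimensional subspaces (respectively distinct choices of $B$) give genuinely distinct subgroups, so that the two counts add without double counting.
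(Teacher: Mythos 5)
Your proof is correct and takes essentially the same approach as the paper: both exploit regularity in the form $\delta(H)=\delta(\{1\})=\alpha$ for a single well-chosen subgroup $H$ of the direct product $G=\prod_p V_p$, and both arrive at the identical equation $\frac{p^{n-1}-1}{p-1}+1=\frac{p^{n}-1}{p-1}$, forcing $n=1$. The only difference is the choice of test subgroup: you use a minimal subgroup $A\in\mathcal A_p$ and count the subgroups in which it is maximal, while the paper uses a maximal subgroup of the Sylow $p$-subgroup --- dual choices in the subgroup lattice that yield the same count.
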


\begin{proof}
By the previous claim, it suffices to prove that every Sylow subgroup of $G$ is cyclic of prime order. Let $p$ be a prime divisor of $|G|$ and suppose by contradiction that the Sylow $p$-subgroup $P$ of $G$ has order $p^d$ with $d\geq 2.$ Let $X$ be a maximal subgroup pf $G$. Then $$\delta(X)=\delta_1(X)+\delta_2(X)=\left(\frac{p^{d-1}-1}{p-1}\right)+\left(1+\sum_{q\neq p}\alpha_p(q)\right).$$
Since $$\delta(\{ 1\})=\sum_r \alpha_r=\frac{p^d-1}{p-1}+\sum_{q\neq p}\alpha_p(G)$$
it follows
$$\frac{p^{d-1}-1}{p-1}+1=\frac{p^d-1}{p-1},$$
a contradiction.
\end{proof}

\end{document}